\newtheorem{theorem}{Theorem}[section]
\newtheorem{lemma}[theorem]{Lemma}
\newtheorem{corollary}[theorem]{Corollary}
\newtheorem{question}[theorem]{Question}
\newtheorem{proposition}[theorem]{Proposition}
\newtheorem{example}[theorem]{Example}
\numberwithin{equation}{section}
\newcommand{\CC}{C_k}
\newcommand{\NN}{\mathbb{N}}
\newcommand{\TTT}{\mathcal{T}}
\newcommand{\IR}{\mathbb{R}}
\newcommand{\Ss}{\mathbb{S}}
\newcommand{\e}{\varepsilon}
\newcommand{\cl}{\mathrm{cl}}
\renewcommand{\phi}{\varphi}
\newcommand{\U}{\mathcal U}
\newcommand{\supp}{\mathrm{supp}}
\title[On the Mackey problem for free locally convex spaces]{On the Mackey problem for free locally convex spaces}
\author[S.~Gabriyelyan]{Saak Gabriyelyan}
\address{Department of Mathematics, Ben-Gurion University of the
Negev, Beer-Sheva, P.O. 653, Israel}
\email{saak@math.bgu.ac.il}
\subjclass[2000]{Primary 46A03; Secondary 54H11}
\keywords{free locally convex space, Mackey topology}
\begin{document}

\begin{abstract}
We show that the free locally convex space $L(X)$ over a   Tychonoff space $X$  is a Mackey group iff $L(X)$ is a Mackey space iff $X$ is discrete.
\end{abstract}

\maketitle


\section{Introduction}


Let $(E,\tau)$ be a locally convex space (lcs for short). A locally convex vector topology $\nu$ on $E$ is called {\em compatible with $\tau$} if the spaces $(E,\tau)$ and $(E,\nu)$ have the same topological dual space. The classical  Mackey--Arens theorem states that for every lcs $(E,\tau)$ there exists the finest locally convex vector space topology $\mu$ on $E$ compatible with $\tau$. The topology $\mu$ is called the {\em Mackey toplogy} on $E$ associated with $\tau$, and if $\mu=\tau$, the space $E$ is called a {\em Mackey space}.

An analogous notion in the class of locally quasi-convex (lqc for short) abelian groups was introduced in \cite{CMPT}. For an abelian topological group $(G,\tau)$ we denote by $\widehat{G}$ the group of all continuous characters of $(G,\tau)$ (for all relevant definitions see the next section). Two topologies  $\mu$ and $\nu$ on an abelian group $G$  are said to be {\em compatible } if $\widehat{(G,\mu)}=\widehat{(G,\nu)}$. Following \cite{CMPT},  an lqc  abelian group $(G,\mu)$ is called a {\em Mackey group} if for every lqc group topology $\nu$ on $G$ compatible with $\tau$  it follows that $\nu\leq\mu$.

Not every Mackey lcs is a Mackey group. In \cite{Gab-Cp} we show that the space $C_p(X)$, which is a Mackey space for every Tychonoff space $X$, is a Mackey group if and only if it is barrelled. In particular, this result
shows  that there are even metrizable lcs which are not Mackey groups that gives a negative answer to a question posed in \cite{DMPT}.  Only very recently,  answering Question 4.4 of \cite{Gab-Mackey},  Au\ss enhofer \cite{Aus3} and the author \cite{Gabr-A(s)-Mackey} independently have shown that there are lqc groups which do not admit a Mackey group topology. For historical remarks, references and open questions we referee the reader to \cite{Gab-Mackey,MarPei-Tar}. In  Question 4.3 of \cite{Gab-Mackey}, we ask: For which Tychonoff spaces $X$ the free  lcs $L(X)$ is a Mackey space or a Mackey group?  Below we give a complete answer to this question.

\begin{theorem} \label{t:Mackey-space-L(X)}
For a  Tychonoff space $X$, the following assertions are equivalent:
\begin{enumerate}
\item[{\rm (i)}] $L(X)$ is a Mackey group;
\item[{\rm (ii)}] $L(X)$ is a Mackey space;
\item[{\rm (iii)}] $X$ is discrete.
\end{enumerate}
\end{theorem}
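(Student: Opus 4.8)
The plan is to prove the cycle (i) $\Rightarrow$ (ii) $\Rightarrow$ (iii) $\Rightarrow$ (i). Throughout I use the standard identifications for the free locally convex space: the dual of $L(X)$ is $C(X)$ (a continuous linear functional is determined by its continuous restriction to $X$), and the weak topology $\sigma(C(X),L(X))$ is exactly the topology $\tau_p$ of pointwise convergence, since $L(X)$ is spanned by the evaluations $\delta_x$. Consequently the original topology $\tau$ of $L(X)$ is the topology of uniform convergence on the equicontinuous subsets of $C(X)$, while its Mackey topology $\mu$ is that of uniform convergence on the absolutely convex, $\tau_p$-compact subsets of $C(X)$; one always has $\tau\le\mu$, and $L(X)$ is a Mackey \emph{space} precisely when every absolutely convex $\tau_p$-compact subset of $C(X)$ is equicontinuous. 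A short computation using the continuity of $\delta\colon X\to L(X)$ shows that a subset $H\subseteq C(X)$ is equicontinuous over $L(X)$ iff it is a (uniformly bounded) equicontinuous family of functions on $X$ in the usual sense; this is the dictionary I will exploit.

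Step (i) $\Rightarrow$ (ii) is immediate: a locally convex topology compatible with $\tau$ has the same continuous linear functionals, hence the same continuous characters ($\chi=\exp(2\pi i f)$ with $f\in C(X)$), so it is in particular an lqc group topology compatible with $\tau$; if $L(X)$ is a Mackey group then every such topology is coarser than $\tau$, which is exactly the Mackey-space property.

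The heart of the argument is (ii) $\Rightarrow$ (iii), proved contrapositively. Assume $X$ is not discrete and fix a non-isolated point $x_0$. Using complete regularity, for every neighbourhood $U$ of $x_0$ choose a point $y_U\in U\setminus\{x_0\}$ and a function $f_U\in C(X)$ with $0\le f_U\le 1$, $f_U(x_0)=0$ and $f_U(y_U)=1$. The family $\{f_U\}$ is not equicontinuous at $x_0$, hence not equicontinuous over $L(X)$. The goal is to exhibit an absolutely convex, $\tau_p$-compact set $K\subseteq C(X)$ containing all the $f_U$; any such $K$ is then a non-equicontinuous member of the defining family of $\mu$, so $\mu\neq\tau$ and $L(X)$ is not Mackey. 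When $x_0$ admits a nontrivial convergent sequence $y_n\to x_0$ one takes bumps with pairwise disjoint supports and checks directly that $K=\{g\in C(X): g(x_0)=0,\ \sum_n|g(y_n)|\le 1\}$ is absolutely convex and $\tau_p$-closed in the compact cube $\prod_x[-1,1]$, hence $\tau_p$-compact. \textbf{The main obstacle} is the general case, where no sequence need converge to $x_0$ (e.g.\ $P$-spaces): one must build the net of bumps $\{f_U\}$ with supports controlled enough that the pointwise closure of their absolutely convex hull stays \emph{inside} $C(X)$ --- i.e.\ so that every $\tau_p$-cluster point is again continuous and vanishes at $x_0$ --- which is precisely the delicate point, since the pointwise limit of continuous functions is in general discontinuous. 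I expect to resolve this by choosing the $y_U$ and the supports of the $f_U$ along a suitably refined net so that the bumps are ``locally finite away from $x_0$'', forcing any cluster point to be continuous.

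Finally, (iii) $\Rightarrow$ (i): when $X$ is discrete, $L(X)$ is the locally convex direct sum $\bigoplus_{x\in X}\mathbb{R}$, i.e.\ the free vector space carrying its finest locally convex topology $\tau_{\max}$, whose dual is the full algebraic dual $\mathbb{R}^X$. I will show this space is a Mackey group. Let $\nu$ be any lqc group topology with the same character group, equivalently the same dual $\mathbb{R}^X$. Restricted to each finite-dimensional subspace $\mathbb{R}^F$ ($F\subseteq X$ finite), $\nu$ is a Hausdorff lqc group topology compatible with the Euclidean one, hence equals it. Using the local quasi-convexity of $\nu$ --- so that $\nu$ has a base of quasi-convex neighbourhoods --- one upgrades this finite-dimensional agreement to $\nu\le\tau_{\max}$, because a quasi-convex set meeting every $\mathbb{R}^F$ in a Euclidean neighbourhood already absorbs an absolutely convex absorbing set and is therefore a $\tau_{\max}$-neighbourhood. (Local quasi-convexity is essential here: for a general group topology the finite-dimensional reduction fails.) This gives $\nu\le\tau_{\max}$ for every compatible lqc group topology, i.e.\ $L(X)$ is a Mackey group, completing the cycle.
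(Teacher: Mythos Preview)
Your implication (i)\,$\Rightarrow$\,(ii) is correct and matches the paper. The problem is (ii)\,$\Rightarrow$\,(iii): you correctly identify the ``main obstacle'' and then do not overcome it. Even your sequence case is not quite right as written: the set $K=\{g\in C(X):g(x_0)=0,\ \sum_n|g(y_n)|\le1\}$ is not contained in the cube $\prod_x[-1,1]$, since nothing bounds $g$ off $\{x_0\}\cup\{y_n\}$; and if you instead take the $\tau_p$-closure of the absolutely convex hull of your bumps, you must show every pointwise cluster point is continuous --- precisely the unresolved difficulty. In the general (non-first-countable) case you offer only an expectation, not an argument, and ``locally finite away from $x_0$'' is not enough: with disjoint supports the pointwise closure of $\operatorname{acx}\{f_U\}$ contains functions of the form $x\mapsto\lambda(x)f_{U(x)}(x)$ with $\lambda$ varying arbitrarily, which need not be continuous.

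The paper avoids this trap entirely. Rather than exhibiting a $\tau_p$-compact non-equicontinuous set, it passes to the completion $\big(M_c(\mu X),\tau_e\big)$ (completion of a Mackey space is Mackey) and \emph{constructs a compatible strictly finer locally convex topology directly}. Proposition~2.4 produces, for any non-discrete $X$, pairwise disjoint open sets $U_i$ and bumps $g_i$ with $\operatorname{supp}(g_i)\subseteq U_i$ and a point $z$ in the closure of $\bigcup_i\{g_i\ge1\}$. The map $R(\mu)=(\mu(g_i))_i$ lands in $c_0(\kappa)$, and the topology $\mathcal T$ induced by $\mu\mapsto(\mu,R(\mu))$ into $(M_c,\tau_e)\times c_0(\kappa)$ is compatible because every $(\lambda_i)\in\ell_1(\kappa)$ gives $\sum_i\lambda_ig_i\in C(X)$ (disjoint supports!), so every $\mathcal T$-functional is evaluation at some $G\in C(X)$. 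Strict inequality $\tau_e<\mathcal T$ follows since $\delta_z$ lies in the $\tau_e$-closure of $\{\delta_x:g_i(x)\ge1\text{ for some }i\}$ but not in its $\mathcal T$-closure. This ``build the finer topology and check its dual'' route is exactly what bypasses the pointwise-compactness verification you are stuck on.

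Your (iii)\,$\Rightarrow$\,(i) is also only a sketch: the assertion that a quasi-convex $0$-neighbourhood whose trace on each $\mathbb R^F$ is Euclidean ``already absorbs an absolutely convex absorbing set'' is the whole content and is not argued (quasi-convex sets need not be convex, so the usual inductive-limit argument for $\tau_{\max}$ does not apply verbatim). The paper instead cites that $L(X)$ is barrelled for discrete $X$ and that barrelled locally convex spaces are Mackey groups.
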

In particular, Theorem \ref{t:Mackey-space-L(X)} essentially strengthen Theorem 6.4 of \cite{GM} which states that $L(X)$ is quasibarrelled if and only of $X$ is discrete.


\section{Proof of Theorem \ref{t:Mackey-space-L(X)}}


We start from some necessary definitions and notations. Let $X$ be a Tychonoff  space. The space $X$ is called a {\em $k_\IR$-space} if every real-valued function on $X$ which is continuous on every compact subset of $X$ is continuous on $X$.
A subset $A$ of $X$ is called {\em functionally bounded in $X$} if every continuous real-valued function on $X$ is bounded on $A$, and $X$ is a {\em $\mu$-space} if every functionally bounded subset of $X$ has compact closure. The Dieudonn\'{e} completion $\mu X$ of $X$ is always a $\mu$-space.

Denote by $\mathbb{S}$ the unit circle group and set $\Ss_+ :=\{z\in  \Ss:\ {\rm Re}(z)\geq 0\}$.
Let $G$ be an abelian topological group.   A character $\chi\in \widehat{G}$  is a continuous homomorphism from $G$ into $\mathbb{S}$.
A subset $A$ of $G$ is called {\em quasi-convex} if for every $g\in G\setminus A$ there exists   $\chi\in \widehat{G}$ such that $\chi(g)\notin \Ss_+$ and $\chi(A)\subseteq \Ss_+$.
The group $G$ is called {\em locally quasi-convex} if it admits a neighborhood base at the neutral element $0$ consisting of quasi-convex sets.
Every real locally convex space is a locally quasi-convex group by Proposition 2.4 of \cite{Ban}.

Following \cite{Mar},  the {\em  free locally convex space}  $L(X)$ on a Tychonoff space $X$ is a pair consisting of a locally convex space $L(X)$ and  a continuous map $i: X\to L(X)$  such that every  continuous map $f$ from $X$ to a locally convex space  $E$ gives rise to a unique continuous linear operator ${\bar f}: L(X) \to E$  with $f={\bar f} \circ i$. The free locally convex space $L(X)$ always exists and is essentially unique. The set $X$ forms a Hamel basis for $L(X)$ and  the map $i$ is a topological embedding, see \cite{Rai,Usp2}.

Let $X$ be a Tychonoff space. For  $\chi = a_1 x_1+\cdots +a_n x_n\in L(X)$ with distinct $x_1,\dots, x_n\in X$ and  nonzero $a_1,\dots,a_n\in\IR$, we set
\[
\| \chi\|:=|a_1|+\cdots +|a_n|, \; \mbox{ and } \; \mathrm{supp}(\chi):=\{ x_1,\dots, x_n\}.
\]


For an lcs $E$, we denote by $E'$ the topological dual space of $E$. 
For a cardinal number $\kappa$, the classical Banach space $c_0(\kappa)$ consists of all bounded functions $g: \kappa \to \IR$ such that the set $\{ i\in \kappa: |g(i)|\geq \e\}$ is finite for every $\e>0$ and is endowed with the supremum norm $\| \cdot\|_\infty$.

We denote by $\CC(X)$  the space $C(X)$ of all real-valued continuous functions on $X$ endowed with the compact-open topology $\tau_k$.
The support of a function $f\in C(X)$ is denoted by $\supp(f)$.
Denote by $M_c(X)$ the space of all real regular Borel measures on $X$ with compact support. It is well-known that the dual space of $\CC(X)$ is $M_c(X)$, see \cite[Proposition~7.6.4]{Jar}. For every $x\in X$, we denote by $\delta_x \in M_c(X)$ the evaluation map (Dirac measure), i.e. $\delta_x(f):=f(x)$ for every $f\in C(X)$. Denote by $\tau_e$ the polar topology on $M_c(X)$ defined by the family of all equicontinuous  pointwise bounded subsets of $C(X)$.
We shall use the following deep result of Uspenski\u{\i} \cite{Usp2}.

\begin{theorem}[\cite{Usp2}] \label{t:Free-complete-L}
Let $X$ be a Tychonoff space and let $\mu X$ be the Dieudonn\'{e} completion of $X$. Then the completion $\overline{L(X)}$ of $L(X)$ is topologically isomorphic to $\big(M_c(\mu X),\tau_e\big)$.
\end{theorem}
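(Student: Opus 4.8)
The plan is to realize $L(X)$ as a topological vector subspace of $\big(M_c(\mu X),\tau_e\big)$ and then show, via Grothendieck's theory of completions, that $\big(M_c(\mu X),\tau_e\big)$ is complete and contains $L(X)$ as a dense subspace; since the completion is unique, this yields the desired topological isomorphism. First I would pin down the duality. By the universal property of the free locally convex space applied to the target $E=\IR$, a linear functional $u$ on $L(X)$ is continuous if and only if $u\circ i\in C(X)$, and $u$ is recovered from $u\circ i$ by linearity; hence $L(X)'=C(X)$ under the pairing $\la \sum_j a_j x_j, f\ra=\sum_j a_j f(x_j)$. On the other side, assigning to $\chi=a_1x_1+\dots+a_nx_n$ the measure $a_1\delta_{x_1}+\dots+a_n\delta_{x_n}$ embeds $L(X)$ linearly into $M_c(X)=\CC(X)'$ as the subspace of finitely supported measures, compatibly with the two pairings. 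Thus $L(X)$, $M_c(X)$ and $M_c(\mu X)$ all sit in duality with $C(X)\cong C(\mu X)$: continuous functions extend uniquely to $\mu X$, so the two function spaces, together with their equicontinuous pointwise bounded families, are identified.

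Next I would identify the free topology of $L(X)$ with the trace of $\tau_e$. Because $L(X)$ carries the finest locally convex topology making $i$ continuous, one checks that a subset $H\subseteq L(X)'=C(X)$ is equicontinuous exactly when it is pointwise bounded and equicontinuous as a family of functions on $X$. Consequently the free topology, being the topology of uniform convergence on the equicontinuous subsets of its own dual, is precisely the polar topology of uniform convergence on the pointwise bounded equicontinuous subsets of $C(X)$, i.e. the trace of $\tau_e$ on $L(X)$. This is the step where freeness is used in full: the neighborhood basis of $0$ in $L(X)$ is built from the polars of such families $H$, and matching these polars on $L(X)$ and on $M_c(X)$ is what makes the inclusion a topological embedding.

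Finally I would invoke Grothendieck's completeness theorem: the completion $\overline{L(X)}$ is the space of all linear functionals on $C(X)$ whose restriction to every equicontinuous pointwise bounded set $H$ is $\sigma(C(X),L(X))$-continuous, endowed with the topology of uniform convergence on such $H$, and the latter topology is exactly $\tau_e$. It therefore remains to identify these functionals with the elements of $M_c(\mu X)$. Each $\nu\in M_c(\mu X)$ gives such a functional by integration, and its restriction to an equicontinuous pointwise bounded $H$ is $\sigma(C(X),L(X))$-continuous because on such a family pointwise convergence coincides with uniform convergence on the compact support of $\nu$ (Ascoli). Combining this with the density of the finitely supported measures, which are \weakstar-dense and hence, by the coincidence of topologies above, $\tau_e$-dense, completes the identification.

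The converse direction, that every admissible functional is represented by a compactly supported measure on $\mu X$, is the crux and the main obstacle: one must produce the representing measure by a Riesz-type argument and, above all, show that its support is genuinely compact. This is exactly where the Dieudonn\'e completion is forced upon us, since the natural support of a limit of finitely supported measures is only functionally bounded in $X$ and so need not be compact there, but becomes compact after passing to the $\mu$-space $\mu X$, in which every functionally bounded set has compact closure.
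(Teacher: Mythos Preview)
The paper does not prove this theorem; it is quoted, with attribution, as a ``deep result of Uspenski\u{\i}'' and is used throughout as a black box. There is therefore no proof in the paper to compare your proposal against.

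As for the proposal itself: the overall strategy---embed $L(X)$ into $M_c(\mu X)$ via Dirac measures, identify the equicontinuous subsets of $L(X)'=C(X)$ with the pointwise bounded equicontinuous families of functions so that the free topology is the trace of $\tau_e$, and then invoke Grothendieck's completion theorem---is the standard route and your forward direction (each $\nu\in M_c(\mu X)$ yields a functional whose restriction to every pointwise bounded equicontinuous $H$ is $\sigma(C(X),L(X))$-continuous, via Ascoli) is correct. But you yourself flag that the converse, producing a compactly supported regular Borel measure on $\mu X$ from an abstract functional in the Grothendieck completion, is ``the crux and the main obstacle,'' and you do not carry it out. Your closing remark that the support is ``only functionally bounded in $X$ and so \dots\ becomes compact after passing to $\mu X$'' already presupposes that a well-defined support exists and is functionally bounded; establishing that is exactly the substantive work (one typically shows that the functional vanishes on every $f\in C(X)$ that vanishes on some fixed compact $K\subseteq\mu X$, so that it factors through $C(K)$, and then applies Riesz). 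So what you have is a correct and well-organized roadmap, not a proof: the decisive representation step is named but left open.
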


We need also the following corollary of Theorem \ref{t:Free-complete-L} noticed in \cite{Gab-Respected}.
\begin{corollary}[\cite{Gab-Respected}] \label{p:Ck-Mc-compatible}
Let $X$ be a $\mu$-space. Then the topology $\tau_e$ on $M_c(X)$ is compatible with the duality $(\CC(X),M_c(X))$.
\end{corollary}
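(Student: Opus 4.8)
The plan is to unpack the statement as the assertion that the topological dual of $(M_c(X),\tau_e)$ coincides, via the canonical pairing $\langle \nu, f\rangle = \int_X f\,d\nu$, with $C(X)$; this is exactly what compatibility of $\tau_e$ with the duality $(\CC(X),M_c(X))$ means. Since $\tau_e$ is a polar topology containing the polars of finite sets, it dominates $\sigma(M_c(X),C(X))$, so the dual already contains $C(X)$; the content is that it is no larger. I would obtain this cleanly from Uspenski\u{\i}'s theorem (Theorem~\ref{t:Free-complete-L}) together with two soft facts: completion does not change the dual, and the universal property pins down $L(X)'$.

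First I would record that $L(X)'=C(X)$. By the defining universal property applied to the target $\IR$, a continuous linear functional on $L(X)$ is the same thing as a continuous map $X\to\IR$, and under $i\colon x\mapsto \delta_x$ the associated pairing is $\langle \delta_x, f\rangle = f(x)$. Hence this identification is precisely the restriction to $L(X)$ of the canonical $(C(X),M_c(X))$ pairing. Because $L(X)$ is dense in its completion and every continuous linear functional on a dense subspace extends uniquely and continuously, we get $\overline{L(X)}' = L(X)' = C(X)$, with the same pairing.

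Next I would use the hypothesis that $X$ is a $\mu$-space to identify $M_c(\mu X)$ with $M_c(X)$. The key point-set fact is that a subset of $X$ is functionally bounded iff its closure in the Dieudonn\'e completion $\mu X$ is compact; since $X$ is a $\mu$-space these closures already lie in $X$, which forces $\mu X = X$ and hence $M_c(\mu X)=M_c(X)$ with the same topology $\tau_e$. Feeding this into Theorem~\ref{t:Free-complete-L} yields a topological isomorphism $\overline{L(X)} \cong (M_c(X),\tau_e)$ carrying the canonical pairing to the canonical pairing, and combined with the previous paragraph this gives $(M_c(X),\tau_e)' = C(X)$, which is the claim. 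I expect the main obstacle to be exactly this point-set step: justifying the characterization of the compact subsets of $\mu X$ as closures of functionally bounded subsets of $X$, deducing $\mu X = X$ for a $\mu$-space, and then doing the bookkeeping that Uspenski\u{\i}'s isomorphism intertwines the two canonical dualities, so that the dual one computes is genuinely $C(X)$ under the $(\CC(X),M_c(X))$ pairing rather than some abstract bidual.

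As a cross-check one can argue directly, bypassing $\mu X$, via the Mackey--Arens theorem: any equicontinuous, pointwise bounded $H\subseteq C(X)$ has pointwise closure contained in $C(X)$ (equicontinuity is inherited by pointwise limits, so the limits are continuous) and compact in $C_p(X)$ by Tychonoff's theorem, while on an equicontinuous set the pointwise topology agrees with $\sigma(C(X),M_c(X))$ (pointwise convergence upgrades to uniform convergence on compacta, hence to convergence against compactly supported measures). Thus $H$ is relatively $\sigma(C(X),M_c(X))$-compact, so $\tau_e$ lies below the Mackey topology $\tau(M_c(X),C(X))$; since moreover each singleton $\{f\}$ with $f\in C(X)$ is equicontinuous, every $f$ is $\tau_e$-continuous, and the dual is exactly $C(X)$.
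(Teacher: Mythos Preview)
Your main argument is essentially the paper's two-line proof: invoke $L(X)'=C(X)$ (the paper cites \cite{Rai}, you derive it from the universal property), use that completion preserves the dual, and apply Uspenski\u{\i}'s Theorem~\ref{t:Free-complete-L} together with the $\mu$-space hypothesis to identify $(M_c(X),\tau_e)'=C(X)$. Your additional self-contained cross-check via Mackey--Arens, showing directly that equicontinuous pointwise-bounded subsets of $C(X)$ are relatively $\sigma(C(X),M_c(X))$-compact, is an independent route the paper does not pursue.
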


\begin{proof}
It is well-known that $L(X)'=C(X)$, see \cite{Rai}. Now Theorem \ref{t:Free-complete-L} implies $(M_c(X),\tau_e)'=L(X)'=C(X)$.
\end{proof}

The next lemma follows from Proposition 2.5 of \cite{Gab-Mackey}, we give its proof for the sake of completeness of the paper.
\begin{lemma} \label{l:Mackey-space-group}
If a real lcs $(E,\tau)$ is a Mackey group, then it is a Mackey space.
\end{lemma}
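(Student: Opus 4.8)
The plan is to combine the Mackey--Arens theorem with the standard identification of the character group of a real lcs with its topological dual. Write $\mu$ for the Mackey topology on $E$ associated with $\tau$; by Mackey--Arens $\mu$ is the finest locally convex vector topology on $E$ with $(E,\mu)'=(E,\tau)'=E'$, so in particular $\tau\leq\mu$. It therefore suffices to prove the reverse inequality $\mu\leq\tau$, since $\tau=\mu$ is exactly the assertion that $(E,\tau)$ is a Mackey space.

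The key tool is the canonical homomorphism $p\colon\IR\to\Ss$, $p(t):=\exp(2\pi i t)$, together with the well-known fact (see \cite{Ban}) that for any real topological vector space $(E,\sigma)$ the assignment $f\mapsto p\circ f$ is a group isomorphism of $(E,\sigma)'$ onto $\widehat{(E,\sigma)}$. Since this correspondence is induced by the single map $p$ and is independent of $\sigma$, it shows that for two locally convex topologies $\sigma_1,\sigma_2$ on $E$ one has $(E,\sigma_1)'=(E,\sigma_2)'$ if and only if $\widehat{(E,\sigma_1)}=\widehat{(E,\sigma_2)}$. In other words, two locally convex topologies on $E$ are compatible as locally convex spaces precisely when they are compatible as topological groups.

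Applying this equivalence to the pair $\tau,\mu$: from $(E,\mu)'=(E,\tau)'$ I obtain $\widehat{(E,\mu)}=\widehat{(E,\tau)}$, i.e.\ $\mu$ is a group topology on $E$ compatible with $\tau$. Moreover $\mu$ is locally convex, hence locally quasi-convex by Proposition~2.4 of \cite{Ban}, so $\mu$ is an lqc group topology compatible with $\tau$. Now the hypothesis that $(E,\tau)$ is a Mackey group applies by definition to $\nu:=\mu$ and yields $\mu\leq\tau$. Together with $\tau\leq\mu$ this gives $\tau=\mu$, so $(E,\tau)$ is a Mackey space, as required.

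The only substantial ingredient is the character--functional correspondence and its consequence that, for locally convex topologies, compatibility in the group sense coincides with compatibility in the lcs sense; once this is in place the argument is just a comparison of the two topologies. The point to be careful about is the bijectivity of $f\mapsto p\circ f$: injectivity is immediate, while surjectivity rests on the fact that every continuous character of a real tvs lifts through $p$ to a continuous $\IR$-linear functional. I would also make sure that the implication ``locally convex $\Rightarrow$ locally quasi-convex'' is invoked for $\mu$, which is exactly the cited Proposition~2.4 of \cite{Ban}.
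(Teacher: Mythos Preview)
Your proof is correct and follows essentially the same route as the paper's. Both arguments use the character--functional correspondence from \cite{Ban} to translate lcs-compatibility into group-compatibility, then invoke Proposition~2.4 of \cite{Ban} and the Mackey-group hypothesis; the only cosmetic difference is that the paper checks $\nu\leq\tau$ for an arbitrary compatible locally convex $\nu$, whereas you first invoke Mackey--Arens and then apply the same reasoning to the single topology $\mu$.
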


\begin{proof}
Let $\nu$ be a locally convex vector topology on $E$ compatible with $\tau$. Applying Proposition 2.3 of \cite{Ban} we obtain $\widehat{(E,\nu)}=\widehat{(E,\tau)}$. Hence $\nu$ is a locally quasi-convex group topology (see Proposition 2.4 of \cite{Ban}) compatible with $\tau$. Therefore $\nu\leq\tau$ since $(E,\tau)$ is a Mackey group. Thus $(E,\tau)$ is a Mackey space.
\end{proof}

We need the following characterization of non-discrete Tychonoff spaces.
\begin{proposition} \label{p:Mackey-space-L(X)-0}
A Tychonoff space $X$ is not  discrete if and only if there exist an infinite cardinal $\kappa$, a point $z\in X$, a family $\{ g_i\}_{i\in\kappa}$ of continuous functions from $X$ to $[0,2]$ and a family $\{ U_i\}_{i\in\kappa}$ of open subsets of $X$ such that
\begin{enumerate}
\item[{\rm (i)}] $\supp(g_i) \subseteq U_i $ for every $i\in\kappa$;
\item[{\rm (ii)}] $U_i\cap U_j=\emptyset $ for all distinct $i,j\in\kappa$;
\item[{\rm (iii)}] $z\not\in U_i$ for every $i\in\kappa$ and $z\in \cl\big(\bigcup_{i\in\kappa} \{ x\in X: g_i(x)\geq 1\}\big)$.
\end{enumerate}
\end{proposition}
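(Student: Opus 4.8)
The plan is to prove both directions of the equivalence by unwinding what it means for a Tychonoff space to fail to be discrete. For the easy direction, suppose the described data $(\kappa, z, \{g_i\}, \{U_i\})$ exist. If $X$ were discrete, the singleton $\{z\}$ would be open, and I would argue that condition (iii) forces $z$ to be a limit point of $\bigcup_i \{x : g_i(x)\geq 1\}$; since $z$ lies in none of the $U_i$ (and hence in none of the supports), $z$ itself is not in this union, so it is a genuine accumulation point, contradicting discreteness. Thus the existence of the data implies $X$ is not discrete.

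\medskip

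For the substantive direction, I would assume $X$ is not discrete and construct the families. Non-discreteness gives a point $z\in X$ that is not isolated, so $z\in \cl(X\setminus\{z\})$. The idea is to build a transfinite/indexed family of pairwise disjoint open sets $U_i$ accumulating at $z$, each carrying a continuous bump function $g_i$ valued in $[0,2]$ that attains value $\geq 1$ somewhere, and to arrange that $z$ lies in the closure of the union of these ``high'' regions. Concretely, I would use the Tychonoff (complete regularity) property: around each chosen point $x\neq z$ near $z$, complete regularity produces a continuous function separating $x$ from the closed set not containing $x$, which after rescaling gives a function into $[0,2]$ equal to (say) $2$ at $x$ and supported in a small open neighborhood $U$ of $x$ avoiding $z$. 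The cardinal $\kappa$ is then the cardinality of a suitably chosen net/family of such points converging to $z$.

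\medskip

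The core combinatorial work is achieving disjointness of the $U_i$ simultaneously with accumulation at $z$. I would select the points $x_i$ together with their neighborhoods by a maximality (Zorn's lemma) argument: take a maximal family of points $x_i\neq z$ each equipped with an open neighborhood $U_i$ avoiding $z$, pairwise disjoint, with $z$ in the closure of $\bigcup_i U_i$. Maximality plus the fact that $z$ is not isolated guarantees that $z$ remains in the closure of the union — otherwise one could enlarge the family by adding another point from the nonempty punctured neighborhood of $z$ outside $\cl(\bigcup_i U_i)$, contradicting maximality. For each $i$, complete regularity yields $g_i: X\to[0,2]$ with $\supp(g_i)\subseteq U_i$ and $g_i(x_i)\geq 1$, which secures (i). Disjointness is (ii) by construction, and (iii) follows because each $x_i\in\{g_i\geq 1\}$, so $\bigcup_i U_i$ and $\bigcup_i\{g_i\geq 1\}$ have the same closure containing $z$.

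\medskip

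The main obstacle I anticipate is the simultaneous control of disjointness and the infinitude/accumulation requirement: one must ensure the index set is genuinely infinite (so $\kappa$ is an infinite cardinal) and that $z$ does not accidentally fall inside one of the $U_i$ or fail to accumulate. The Zorn's-lemma extraction handles this cleanly, but verifying that the maximal family cannot be finite requires using that a finite union of sets each having $z$ outside its closure still has $z$ outside its closure, so $z$ would remain a non-isolated point of the complement and the family could be extended — forcing infinitude. Once this maximality framework is in place, conditions (i)--(iii) follow almost immediately from complete regularity and the construction, and combining with the easy direction completes the equivalence.
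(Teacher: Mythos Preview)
Your approach to the easy direction is fine, and the Zorn's-lemma idea is natural, but the hard direction has a genuine gap. The crucial claim that $\bigcup_i U_i$ and $\bigcup_i\{g_i\geq 1\}$ have the same closure is false in general: once the maximal family $\{(x_i,U_i)\}$ is fixed, the bump functions $g_i$ are peaked only near the chosen points $x_i$, and nothing forces these points (or the sets $\{g_i\geq 1\}$) to accumulate at $z$, even though the open sets $U_i$ do. Concretely, take $X=[0,1]^2$, $z=(0,0)$, and $U_n=\big(\tfrac{1}{n+1},\tfrac{1}{n}\big)\times(0,1)$. This is a maximal pairwise-disjoint family of open sets with $z\notin\cl(U_n)$ (the complement of $\bigcup_n U_n$ has empty interior), and certainly $z\in\cl\big(\bigcup_n U_n\big)$. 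But choosing $x_n=(c_n,\tfrac12)\in U_n$, with $c_n$ the midpoint of $\big(\tfrac{1}{n+1},\tfrac{1}{n}\big)$, gives $x_n\to(0,\tfrac12)\neq z$; bump functions $g_n$ supported in small balls around $x_n$ then satisfy $z\notin\cl\big(\bigcup_n\{g_n\geq 1\}\big)$, so (iii) fails. (Your infinitude argument also tacitly assumes $z\notin\cl(U_i)$ rather than merely $z\notin U_i$; without this a single set can already be maximal, e.g.\ $X=[0,1]$, $z=0$, family $\{(0,1]\}$. This is fixable, but secondary to the main gap.)

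This is precisely why the paper's proof is not a one-shot Zorn argument but a dichotomy. If every cozero set in $X$ is closed, then $X$ has a base of clopen sets, and Zorn applied to \emph{clopen} $U_i$ with $g_i=\chi_{U_i}$ works, because then $\{g_i\geq 1\}=U_i$ and (iii) follows immediately from $z\in\cl\big(\bigcup_i U_i\big)$. If instead some continuous $h:X\to[0,1]$ has a non-closed cozero set, the paper picks $z$ on its boundary and builds \emph{countably many} $U_n$ and $g_n$ explicitly from level sets of $h$ (with a further subcase split according to whether $\overline{h(W)}$ contains intervals $[0,\e)$ for all neighbourhoods $W$ of $z$), so that the sets $\{g_n\geq 1\}$ themselves are $h$-preimages of intervals shrinking to $0$ and therefore accumulate at $z$. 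That case analysis is the missing idea; without it, your maximal family controls the $U_i$ but not the sets $\{g_i\geq 1\}$ that condition (iii) actually demands.
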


\begin{proof}
The sufficiency follows from (i)-(iii) which cannot hold simultaneously for discrete spaces. To prove the necessity we consider two cases.

\smallskip
{\em Case 1. There is a continuous function $h: X\to [0,1]$ such that the set $L:=\{ x\in X: h(x)>0\}$ is not closed.}  So there is a $z\in \cl(L)$ such that $h(z)=0$.
We distinguish between two subcases.

\smallskip
{\em Subcase 1.1. For every neighborhood $W$ of $z$, the closure $\overline{h(W)}$ of $h(W)$ contains an interval of the form $[0,\e)$ for some $\e>0$.} For every $n\in\NN$, set
\[
t_n(x) := \max\left\{ h(x)-\frac{1}{3n+1},0\right\} \cdot \max\left\{ 0, \frac{1}{3n-1}-h(x)\right\}
\]
and
\[
A_n := \sup\{ t_n(x): x\in X\}.
\]
Let $m$ be the least natural number such that $A_n>0$ for every $n>m$. For every $n>m$, set
\[
U_n := h^{-1} \left( \frac{1}{3n+1},  \frac{1}{3n-1}\right)\;\; \mbox{ and } \;\; g_n(x):= \frac{2}{A_n} \cdot t_n(x),
\]
Then, for every $n>m$, we have $g_n(X)\subseteq [0,2]$, $\supp(g_n) \subseteq U_n$ and $z\not\in U_n$. Clearly, (i) and (ii) are fulfilled and $z\not\in U_n$ for every $n>m$. So to check  that the sequences $\{ U_n: n>m\}$ and  $\{ g_n: n>m\}$   and the point $z$ satisfy (i)-(iii) we have to show that $z\in \cl\big( \bigcup_{n>m} g_n^{-1} \big( [1,2]\big)\big)$.

Fix arbitrarily a neighborhood $W$ of $z$ in $X$. Then, by assumption, $\overline{h(W)}$ contains $[0,\e)$ for some $\e\in(0,1)$. Therefore, if $n_0 >(1+3\e)/(3\e)$ there is a $y\in W$ such that $t_{n_0}(y)\geq (1/2) A_{n_0}$, and hence $g_{n_0}(y)\geq 1$. Thus $g_{n_0}^{-1} \big( [1,2]\big) \cap W$ is not empty and hence $z\in \cl\big( \bigcup_{n>m} g_n^{-1} \big( [1,2]\big)\big)$.

\smallskip
{\em Subcase 1.2. There is a neighborhood $W$ of $z$ such that the closure $\overline{h(W)}$ of $h(W)$ does not contain an interval of the form $[0,\e)$}. Then there exist sequences $\{a_n\}_{n\in\NN}$ and $\{b_n\}_{n\in\NN}$ in $(0,1)$ converging to zero such that
\[
b_{n+1} < a_n < b_n, \; [b_{n+1},a_n]\cap \overline{h(W)}=\emptyset \mbox{ and } (a_n,b_n)\cap \overline{h(W)}\not=\emptyset, \quad \forall n\in\NN.
\]
Set $a_0:=1$ and
\[
c_n:=\frac{1}{2} (b_{n+1} +a_n) \; \mbox{ and } \; d_n:=\frac{1}{2} (b_{n} +a_{n-1}), \quad \forall n\in\NN.
\]
Then $c_n < a_n <b_n < d_n < 1$. For every $n\in\NN$, let $r_n(x)$ be the piecewise linear continuous function from $[0,1]$ to $[0,1]$ such that
\[
r_n\big( [0,c_n]\cup[d_n,1]\big) = \{ 0\} \mbox{ and } r_n\big( [a_n,b_n]\big) = \{ 1\},
\]
and set
\[
U_n:= h^{-1}(c_n,d_n) \; \mbox{ and } \; g_n(x) := r_n \big( h(x)\big),  \quad x\in X.
\]
By construction, the sequences $\{ U_n: n\in\NN\}$ and  $\{ g_n: n\in\NN\}$   and the point $z$ satisfy (i) and (ii) and $z\not\in U_n$ for every $n\in\NN$. Let us show that every neighborhood $U$ of $z$ contains elements of $\bigcup_{n\in\NN} g_n^{-1} \big( \{1\}\big)$. We can assume that $U\subseteq W$. Since $[b_{n+1},a_n]\cap \overline{h(W)}=\emptyset$ we obtain that $h(W) \subseteq \{ 0\}\cup \bigcup_{n\in\NN} (a_n,b_n)$. Therefore, if $y\in U$ and $n_0\in\NN$ is such that $h(y)\in (a_{n_0},b_{n_0})$ (such a $y$ exists because $z\in \cl(L)$), then $g_{n_0}(y)=1$.

\smallskip
{\em Case 2. For every continuous function $h: X\to [0,1]$ the set $\{ x\in X: h(x)>0\}$ is closed.}

\smallskip
We claim that $X$ has a neighborhood base containing closed-and-open sets. Indeed, since $X$ is Tychonoff, for every point $x\in X$ and each open neighborhood $U$ of $x$ there is a continuous function $h:X\to [0,1]$ such that $h(x)=1$ and $h(X\setminus U)=\{ 0\}$. It remains to note that, by assumption,  the open neighborhood $h^{-1}\big( (0,1]\big)\subseteq U$ of $x$ also is closed.

\smallskip
Now, by the assumption of the proposition,  there is a non-isolated point $z\in X$. By the Zorn lemma, there exists a maximal (under inclusion) family $\U=\{ U_i: i\in\kappa\}$ of pairwise disjoint closed-and-open sets such that $z\not\in U_i$ for every $i\in \kappa$. The maximality of $\U$ and the claim imply that $z\in \cl\big( \bigcup \U \big)$. For every $i\in\kappa$, let $g_i$ be the characteristic function of $U_i$. Clearly, the families $\U$ and $\{ g_i: i\in\kappa\}$ and the point $z$ satisfy conditions (i)-(iii).
\end{proof}

The following proposition is crucial for the proof of Theorem \ref{t:Mackey-space-L(X)}.

\begin{proposition} \label{p:Mackey-space-L(X)-1}
Let $X$ be  a Dieudonn\'{e} complete space. If $\big(M_c(X),\tau_e\big)$ is a Mackey space, then $X$ is discrete.
\end{proposition}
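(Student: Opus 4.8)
The plan is to prove the contrapositive: assuming $X$ is not discrete, I will show that $\tau_e$ is strictly coarser than the Mackey topology $\mu$ of the dual pair $\big(C(X),M_c(X)\big)$, so that $\big(M_c(X),\tau_e\big)$ fails to be a Mackey space. Since $X$ is Dieudonn\'e complete it coincides with its Dieudonn\'e completion and hence is a $\mu$-space, so Corollary \ref{p:Ck-Mc-compatible} guarantees that $\tau_e$ is compatible with this duality. Thus $\tau_e$ lies automatically below the Mackey topology $\mu$, and it suffices to exhibit a single $\mu$-neighbourhood of $0$ in $M_c(X)$ that is not a $\tau_e$-neighbourhood.

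First I would invoke Proposition \ref{p:Mackey-space-L(X)-0} to obtain an infinite cardinal $\kappa$, a point $z$, functions $g_i\colon X\to[0,2]$ and pairwise disjoint open sets $U_i$ with $\supp(g_i)\subseteq U_i$, $z\notin U_i$, and $z\in\cl\big(\bigcup_i\{x\in X: g_i(x)\ge1\}\big)$. The set $S:=\{g_i:i\in\kappa\}$ is then \emph{not} equicontinuous: as $g_i(z)=0$ for all $i$ while every neighbourhood of $z$ contains a point $x$ with $g_i(x)\ge1$ for some $i$, equicontinuity at $z$ fails for $\e=1$. Next, using the disjointness of the supports, I would show that for every $\lambda=(\lambda_i)\in c_0(\kappa)$ the pointwise sum $f_\lambda:=\sum_i\lambda_ig_i$ is a well-defined continuous function: on each $U_{i_0}$ it coincides with $\lambda_{i_0}g_{i_0}$, while at any point outside $\bigcup_iU_i$ (in particular at $z$) one uses that only finitely many indices satisfy $|\lambda_i|\ge\e/2$ and that the union of their supports is a closed set missing that point. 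This yields an injective linear map $\lambda\mapsto f_\lambda$ of $c_0(\kappa)$ into $C(X)$ with $\|f_\lambda\|_\infty\le2\|\lambda\|_\infty$.

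I then consider $K:=\{f_\lambda:\lambda\in\ell_1(\kappa),\ \|\lambda\|_1\le1\}$, which is absolutely convex and contains $S=\{f_{e_i}\}$; the heart of the argument is that $K$ is $\sigma\big(C(X),M_c(X)\big)$-compact. For each $\mu\in M_c(X)$ the family $\big(\la g_i,\mu\ra\big)_i$ is summable, since the supports are disjoint and $|\mu|$ is finite, so $c:=\big(\la g_i,\mu\ra\big)_i\in c_0(\kappa)$; by uniform convergence of the defining series one gets $\la f_\lambda,\mu\ra=\sum_i\lambda_i\la g_i,\mu\ra=\la\lambda,c\ra$. Hence $\lambda\mapsto f_\lambda$ is continuous from $\big(B_{\ell_1(\kappa)},\sigma(\ell_1,c_0)\big)$ into $\big(C(X),\sigma(C(X),M_c(X))\big)$, and since $B_{\ell_1(\kappa)}$ is weak$^*$-compact by Banach--Alaoglu, its image $K$ is $\sigma\big(C(X),M_c(X)\big)$-compact.

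Being the polar of an absolutely convex $\sigma\big(C(X),M_c(X)\big)$-compact set, $K^\circ$ is a $\mu$-neighbourhood of $0$. To finish I would show $K^\circ$ is not a $\tau_e$-neighbourhood: otherwise some basic $\tau_e$-neighbourhood $T^\circ$, with $T$ equicontinuous and pointwise bounded, satisfies $T^\circ\subseteq K^\circ$; taking polars and using the bipolar theorem gives $S\subseteq K=K^{\circ\circ}\subseteq T^{\circ\circ}=\overline{\acx(T)}$, the closure being in $\sigma\big(C(X),M_c(X)\big)$. As the absolutely convex hull of an equicontinuous family is equicontinuous, and equicontinuity is preserved under pointwise (hence $\sigma(C(X),M_c(X))$) closure, $S$ would be equicontinuous, contradicting the second paragraph. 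Therefore $\tau_e\neq\mu$ and $\big(M_c(X),\tau_e\big)$ is not Mackey. I expect the main obstacle to be the compactness of $K$: combining the disjointness hypotheses with the summability of $\big(\la g_i,\mu\ra\big)_i$ to establish continuity of $f_\lambda$ and weak$^*$-to-weak continuity of $\lambda\mapsto f_\lambda$ is the delicate point, and it is what lets one bypass Krein's theorem, which is unavailable since $\CC(X)$ need not be quasi-complete.
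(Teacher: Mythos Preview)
Your argument is correct. It is, however, organised dually to the paper's proof, and the contrast is worth noting.

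The paper works on the $M_c(X)$ side: it uses the linear map $R\colon M_c(X)\to c_0(\kappa)$, $R(\mu)=\big(\mu(g_i)\big)_i$, to pull back the product topology of $(M_c(X),\tau_e)\times c_0(\kappa)$ to a locally convex topology $\TTT$ on $M_c(X)$. Compatibility of $\TTT$ with $\tau_e$ is checked exactly as in your Claim (the dual of $c_0(\kappa)$ contributes only functions $\sum_i\lambda_ig_i\in C(X)$), while $\tau_e\neq\TTT$ is shown concretely by exhibiting the set $S=\{\delta_x:\exists\,i,\ g_i(x)\ge1\}$ and proving that $\delta_z$ lies in the $\tau_e$-closure of $S$ but not in its $\TTT$-closure.

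You work on the $C(X)$ side: the adjoint map $\lambda\mapsto f_\lambda=\sum_i\lambda_ig_i$ carries the weak$^*$-compact ball $B_{\ell_1(\kappa)}$ onto an absolutely convex $\sigma\big(C(X),M_c(X)\big)$-compact set $K$, and the failure of $K^{\circ}$ to be a $\tau_e$-neighbourhood is detected via non-equicontinuity of $\{g_i\}$ at $z$. The two viewpoints match up precisely: your $K^{\circ}$ coincides with the $\TTT$-neighbourhood $R^{-1}\big(B_{c_0(\kappa)}\big)$, and your equicontinuity contradiction is the dual reformulation of the paper's closure comparison. What your route buys is an explicit Mackey neighbourhood (rather than merely a finer compatible topology) and a transparent use of Banach--Alaoglu in place of the paper's more hands-on convergence argument; what the paper's route buys is that it avoids any appeal to weak$^*$ compactness, staying entirely at the level of nets of Dirac measures.
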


\begin{proof}
Suppose for a contradiction that $X$ is not discrete. Then, by Proposition \ref{p:Mackey-space-L(X)-0}, there exist an infinite cardinal $\kappa$, a point $z\in X$, a family $\{ g_i\}_{i\in\kappa}$ of continuous functions from $X$ to $[0,2]$ and a family $\{ U_i\}_{i\in\kappa}$ of open subsets of $X$ satisfying (i)-(iii) of that proposition. Define a map $R:M_c(X)\to c_0(\kappa)$ by
\[
R(\mu):=\big( \mu(g_i)\big), \quad \mu\in M_c(X).
\]

\smallskip
{\em Claim 1. The map $R$ is well-defined}.

\smallskip
Indeed, let $\mu\in M_c(X)$ be a positive measure. Since $\mu$ is finite and $\sigma$-additive, the condition (ii) of Proposition \ref{p:Mackey-space-L(X)-0}  implies that for every $\e>0$ the number of indices $i\in\kappa$ for which $\mu(U_i)\geq \e$ is finite. Now the claim follows from the inclusion $\supp(g_i) \subseteq U_i $ (see (i)) and the inequalities $0\leq \mu(g_i) \leq 2\mu(U_i)$.

\smallskip
Consider a map  $T: M_c(X)\to \big( M_c(X),\tau_e\big)\times c_0(\kappa)$ defined by
\[
 T(\mu):=\big( \mu, R(\mu)\big), \;\; \forall \mu\in M_c(X).
\]
The map $T$ is well-defined by Claim 1. Denote by $\TTT$ the locally convex vector topology on $M_c(X)$ induced from the product $\big( M_c(X),\tau_e\big)\times c_0(\kappa)$.

\smallskip
{\em Claim 2. The topology $\TTT$ is compatible with $\tau_e$.}

\smallskip
First we note that for every $(\lambda_i)\in \big(c_0(\kappa)\big)'=\ell_1(\kappa)$, the function $\sum_i \lambda_i g_i$ belongs to $C(X)$. The Hahn--Banach extension theorem implies that every $\chi\in (M_c(X),\TTT)'$ has the form
\[
\chi=\big( F, (\lambda_i)\big), \mbox{ where } F\in (M_c(X),\tau_e)' \mbox{ and } (\lambda_i) \in \ell_1(\kappa).
\]
By  Corollary \ref{p:Ck-Mc-compatible}, we have $F\in C(X)$ and hence $G:=F+\sum_{i\in\kappa} \lambda_i f_i\in C(X)$. Therefore
\[
\chi(\mu)=\mu(F)+\sum_{i\in\kappa} \lambda_i \cdot \mu(g_i)=\mu\left( F+\sum_{i\in\kappa} \lambda_i g_i\right)=\mu(G), \; \forall \mu\in M_c(X).
\]
Applying Corollary \ref{p:Ck-Mc-compatible} once again we obtain $\chi=G\in (M_c(X),\tau_e)'$ as desired.

\smallskip
{\em Claim 3. We claim that $\tau_e <\TTT$.}

\smallskip
Indeed, it is clear that $\tau_e\leq \TTT$. Set
\[
S:=\{\delta_{x}: \mbox{ there is an } i\in\kappa \mbox{ such that } g_i(x)\geq 1 \} \subseteq M_c(X).
\]
To show that $\tau_e \not= \TTT$, we shall prove that
(1) $
\delta_z \in \cl_{\tau_e}(S)$,   and (2) $ \delta_z \not\in \cl_{\TTT}(S).
$

To prove that $\delta_z \in \cl_{\tau_e}(S)$, fix arbitrarily a standard neighborhood
\[
[K;\e]:=\{ \mu \in M_c(X): |\mu(f)|<\e \; \forall f\in K\}
\]
of zero in $(M_c(X),\tau_e)$, where $K$ is a pointwise bounded equicontinuous subset of $C(X)$ and $\e>0$. Choose a neighborhood $U$ of $z$ such that
\[
|f(x) - f(z)|<\e, \quad \forall f\in K, \quad \forall x\in U.
\]
By (iii) of Proposition \ref{p:Mackey-space-L(X)-0}, take an $i_0\in\kappa$ and $x_{i_0}\in U$ such that $g_{i_0}(x_{i_0})\geq 1$. Then $\delta_{x_{i_0}}\in S$ and
\[
\big| (\delta_{x_{i_0}} -\delta_z)(f)\big| = |f(x_{i_0}) - f(z)|<\e, \quad \forall f\in K.
\]
Thus $\delta_{x_{i_0}} \in \delta_z + [K;\e]$ and hence $\delta_z \in \cl_{\tau_e}(S)$.

To show that $\delta_z \not\in \cl_{\TTT}(S)$, consider the neighborhood $W:= M_c(X)\times U$ of zero in $\TTT$, where $U=\{ g\in c_0(\kappa): \| g\|_\infty \leq 1/2\}$. Fix arbitrarily $\delta_x\in S$ and choose $j\in\kappa$ such that $g_j(x)\geq 1$. Then the $j$th coordinate $\delta_{x}(g_j)$ of $R(\delta_{x})$ satisfies the following (in  the last equality we use (i) and (ii) of Proposition \ref{p:Mackey-space-L(X)-0})
\[
\big|  \delta_{x}(g_j) - \delta_z(g_j) \big| = | g_j(x) -g_j(z)| = g_j(x)\geq 1 >1/2.
\]
Therefore $R(\delta_{x})- R(\delta_z) \not\in U$ and hence $ \delta_{x}-\delta_z\not\in W$. As $x$ was arbitrary we obtain $\delta_z \not\in \cl_{\TTT}(S)$.

\smallskip
Finally, Claims 2 and 3 imply that $\big(M_c(X),\tau_e\big)$ is not a Mackey space. This contradiction shows that $X$ must be discrete.
\end{proof}

Theorem  \ref{t:Mackey-space-L(X)} follows from the next more general result.
\begin{theorem} \label{t:Mackey-space-L(X)-M(X)}
For a Tychonoff space $X$ the following assertions are equivalent:
\begin{enumerate}
\item[{\rm (i)}] $L(X)$ is a Mackey group;
\item[{\rm (ii)}] $L(X)$ is a Mackey space;
\item[{\rm (iii)}] $\big(M_c(\mu X),\tau_e\big)$ is a Mackey group;
\item[{\rm (iv)}] $\big(M_c(\mu X),\tau_e\big)$ is a Mackey space;
\item[{\rm (v)}] $X$ is discrete.
\end{enumerate}
\end{theorem}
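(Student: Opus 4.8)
The plan is to deduce Theorem~\ref{t:Mackey-space-L(X)-M(X)} from the results already established, so the work is mostly a matter of assembling a chain of implications around the pivotal Proposition~\ref{p:Mackey-space-L(X)-1}. The guiding principle is that $L(X)$ and $(M_c(\mu X),\tau_e)$ share the same completion by Theorem~\ref{t:Free-complete-L}, and for the Mackey question one typically passes to a Dieudonn\'{e} complete space where Proposition~\ref{p:Mackey-space-L(X)-1} applies directly. I would first observe that since $\mu X$ is a $\mu$-space (hence Dieudonn\'{e} complete), the space $(M_c(\mu X),\tau_e)$ is exactly the completion $\overline{L(X)}$ of $L(X)$, and that $L(X)$ is dense in it. Thus statements (i)--(iv) are really statements about one space and its dense subspace.

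\smallskip

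I would arrange the equivalences as a cycle, say
\[
\mathrm{(v)} \Rightarrow \mathrm{(i)} \Rightarrow \mathrm{(ii)} \Rightarrow \mathrm{(iv)} \Rightarrow \mathrm{(v)},
\]
together with the parallel thread $\mathrm{(iii)} \Rightarrow \mathrm{(iv)}$ and $\mathrm{(v)} \Rightarrow \mathrm{(iii)}$. The implication $\mathrm{(i)} \Rightarrow \mathrm{(ii)}$ is immediate from Lemma~\ref{l:Mackey-space-group}, and likewise $\mathrm{(iii)} \Rightarrow \mathrm{(iv)}$. For $\mathrm{(v)} \Rightarrow \mathrm{(i)}$ and $\mathrm{(v)} \Rightarrow \mathrm{(iii)}$, when $X$ is discrete one checks that $\mu X = X$ and that $L(X)$ carries its finest locally convex topology, so the dual is the full algebraic dual and there is no room for a strictly finer compatible topology of any kind; hence it is trivially both a Mackey space and a Mackey group, and the same holds for $(M_c(X),\tau_e)=L(X)$. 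The central analytic content sits in $\mathrm{(iv)} \Rightarrow \mathrm{(v)}$, which is precisely Proposition~\ref{p:Mackey-space-L(X)-1} applied to the Dieudonn\'{e} complete space $\mu X$: if $(M_c(\mu X),\tau_e)$ is a Mackey space then $\mu X$ is discrete, whence $X = \mu X$ is discrete too (a dense subspace of a discrete space equals the whole space).

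\smallskip

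The one genuinely nontrivial bridge, and what I expect to be the main obstacle, is relating the Mackey property of $L(X)$ to that of its completion $(M_c(\mu X),\tau_e)$, i.e.\ closing the loop with $\mathrm{(ii)} \Rightarrow \mathrm{(iv)}$. Since $L(X)$ and $\overline{L(X)}$ have the same dual, the families of compatible locally convex topologies are tightly linked, but transferring ``finest compatible topology coincides with the given one'' across the completion requires care: one must argue that if the dense subspace $L(X)$ admits no strictly finer compatible locally convex topology, then neither does the completion. I would handle this by the standard device that a compatible locally convex topology on $M_c(\mu X)$ restricts to a compatible topology on the dense subspace $L(X)$, and conversely the original topology $\tau_e$ is the unique completion-inducing topology; invoking that $\mu X$ is a $\mu$-space ensures the duality $(C(\mu X), M_c(\mu X))$ is the relevant one throughout via Corollary~\ref{p:Ck-Mc-compatible}. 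Once this transfer is in place the cycle closes and Theorem~\ref{t:Mackey-space-L(X)-M(X)} follows, with Theorem~\ref{t:Mackey-space-L(X)} being the restriction of the cycle to statements (i), (ii), (v).
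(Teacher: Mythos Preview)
Your overall architecture matches the paper's proof almost exactly: the same cycle through (v), the same use of Lemma~\ref{l:Mackey-space-group} for (i)$\Rightarrow$(ii) and (iii)$\Rightarrow$(iv), and the same appeal to Proposition~\ref{p:Mackey-space-L(X)-1} for (iv)$\Rightarrow$(v). Two points, however, deserve correction.

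First, you misidentify the ``main obstacle''. The implication (ii)$\Rightarrow$(iv) is not delicate at all: it is the standard fact that the completion of a Mackey space is again a Mackey space (Jarchow, Proposition~8.5.8), combined with Theorem~\ref{t:Free-complete-L}. There is no need to transfer compatible topologies by hand between $L(X)$ and its completion; the paper dispatches this step in one line.

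Second, your argument for (v)$\Rightarrow$(i) has a small gap. Saying that for discrete $X$ the space $L(X)$ carries the finest locally convex topology shows immediately that $L(X)$ is a Mackey \emph{space}, but it does \emph{not} directly show that $L(X)$ is a Mackey \emph{group}: a compatible locally quasi-convex group topology need not be locally convex, so ``no finer locally convex topology exists'' is not enough. The paper closes this by noting that $L(X)$ is barrelled (\cite[Theorem~6.4]{GM}) and then invoking \cite[Theorem~4.2]{CMPT}, which says that barrelled locally convex spaces are Mackey groups. You should route (v)$\Rightarrow$(i) and (v)$\Rightarrow$(iii) through barrelledness rather than through the finest-topology remark.
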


\begin{proof}
(i)$\Rightarrow$(ii) and (iii)$\Rightarrow$(iv) follow from Lemma \ref{l:Mackey-space-group}.

(ii)$\Rightarrow$(iv) It is well known that the completion of a Mackey space is a Mackey space, see Proposition 8.5.8 of \cite{Jar}. Therefore, by Theorem \ref{t:Free-complete-L},  the space $\big(M_c(\mu X),\tau_e\big)$ is a Mackey space. 

(iv)$\Rightarrow$(v) By Proposition \ref{p:Mackey-space-L(X)-1}, $\mu X$ is discrete. Thus $X$ is discrete as well.

(v)$\Rightarrow$(i),(iii) Since $X$ is discrete Theorem \ref{t:Free-complete-L} implies $L(X)=\big(M_c(X),\tau_e\big)$. Therefore $L(X)$  is a barrelled space by \cite[Theorem~6.4]{GM}, and hence $L(X)$ is a Mackey group by  \cite[Theorem~4.2]{CMPT}.
\end{proof}


\bibliographystyle{amsplain}

\begin{thebibliography}{10}

\bibitem{Aus3}
L. Au{\ss}enhofer, On the non-existence of the Mackey topology for locally quasi-convex groups, Forum Math. accepted.


\bibitem{Ban}
W. Banaszczyk,  \emph{Additive subgroups of topological vector spaces}, LNM 1466, Berlin-Heidelberg-New York 1991.

\bibitem{CMPT}
M.J.~Chasco, E.~Mart\'{\i}n-Peinador,  V.~Tarieladze, On Mackey topology for groups, Studia Math. \textbf{132} (1999), 257--284.

\bibitem{DMPT}
D.~Dikranjan, E.~Mart\'{\i}n-Peinador,  V.~Tarieladze, Group valued null sequences and metrizable non-Mackey groups, Forum Math. \textbf{26} (2014), 723--757.

\bibitem{Gab-Mackey}
S.~Gabriyelyan, On the Mackey topology for abelian topological groups and locally convex spaces,  Topology Appl.  \textbf{211} (2016), 11--23.

\bibitem{Gab-Cp}
 S.~Gabriyelyan, A  characterization of barrelledness of $C_p(X)$, J. Math. Anal. Appl. \textbf{439} (2016), 364--369.

\bibitem{Gabr-A(s)-Mackey}
S. Gabriyelyan, A locally quasi-convex abelian group without a Mackey group topology, Proc. Amer. Math. Soc., accepted.

\bibitem{Gab-Respected}
S. Gabriyelyan, Maximally almost periodic groups and respecting properties, available in arXiv:1712.05521.


\bibitem{GM}
S.S. Gabriyelyan, S.A. Morris,  Free topological vector spaces, Topology Appl., \textbf{223} (2017), 30--49.


\bibitem{Jar}
H.~Jarchow, \emph{Locally Convex Spaces}, B.G. Teubner, Stuttgart, 1981.

\bibitem{Mar}
A.A.~Markov,  On free topological groups, Dokl. Akad. Nauk SSSR \textbf{31} (1941), 299--301.

\bibitem{MarPei-Tar}
E.~Mart\'{\i}n-Peinador,  V.~Tarieladze,  Mackey topology on locally convex spaces and on locally quasi-convex groups. Similarities and historical remarks,  Revista de la Real Academia de Ciencias Exactas, Fisicas y Naturales. Serie A. Matematicas RACSAM, \textbf{110} (2016), 667--679.


\bibitem{Rai}
D.A.~Ra\u{\i}kov, Free locally convex spaces for uniform spaces, Math. Sb. \textbf{63} (1964), 582--590.


\bibitem{Usp2}
V.V.~Uspenski\u{\i}, Free topological groups of metrizable spaces,  Math. USSR-Izv. \textbf{37} (1991), 657--680.


\end{thebibliography}

\end{document}